\documentclass[11pt]{amsart}
\usepackage[left=2cm,top=2cm,right=3cm,nofoot]{geometry}
\usepackage{amsmath,mathtools}%
\usepackage{amsfonts}%
\usepackage{amssymb}%
\usepackage{graphicx}
\usepackage{esint}
\usepackage{hyperref}
\usepackage{enumitem}
\usepackage{accents}
\usepackage{etoolbox}
\patchcmd{\subsection}{-.5em}{.5em}{}{}
\newtheorem{theorem}{Theorem}[section]
\theoremstyle{plain}

\newtheorem{lemma}[theorem]{Lemma}

\newtheorem{remark}[theorem]{Remark}

\numberwithin{equation}{section}
\theoremstyle{plain}
\newtheorem{claim}{Claim}
\usepackage{etoolbox}
\AtEndEnvironment{proof}{\setcounter{claim}{0}}

\begin{document}

\title[]{Spinor inequality for magnetic fields on spin manifolds}

\author{Jurgen Julio-Batalla}
\address{ Universidad Industrial de Santander, Carrera 27 calle 9, 680002, Bucaramanga, Santander, Colombia}
\email{ jajuliob@uis.edu.co}
\thanks{ }

\begin{abstract} 
This paper is concerned with the zero mode equation $D_g\varphi=iA\cdot\varphi$ on closed spin manifold $(M^n,g,\sigma)$ of positive scalar curvature. Here $A$ is a real one form on $M$. We proved that if $(\varphi, A)$ is a non trivial solution of the zero mode equation then $$\parallel dA\parallel_{n/2}>Y(M^n,[g])/(4v_n^{1/2}),$$ where $Y(M^n,[g])$ is the Yamabe constant of $(M^n,g)$ and $v_n=\left[\frac{n}{2}\right]$. In the case of the round sphere $(\mathbb{S}^n,g_{can},\sigma_{can})$ this result confirms that the inequality obtained in \cite{Frank} is not sharp.
\end{abstract}

\maketitle

\section{Introduction}
On closed spin manifold $(M^n,g,\sigma)$ we are interested in the zero mode equation
\begin{equation}\label{Zero}
D_g\varphi=iA\cdot_g\varphi,
\end{equation}

where $A$ is a real one form, $\varphi\in \Gamma(SM)$ and $D_g$ is the Dirac operator with respect to the metric $g$. The action ``$\cdot_g$" of the one form $A$ is via Clifford multiplication.

The study of this equation comes from several areas of mathematical physics. It is mostly associated with the zero mode equation in $\mathbb{R}^3$ with respect to the Euclidean metric $dx^2$. For instance, such equations appear in the study of atoms interacting with magnetic fields, where their presence can signal instability. Specifically, in the nonrelativistic description of a hydrogen-like atom in a magnetic field, the Hamiltonian includes a spin–magnetic field interaction term. By adding the magnetic field energy to the system, Fröhlich-Lieb-Loss in \cite{Frohlich} obtained a variational problem whose infimum can become arbitrarily negative (indicating instability) if the nuclear charge $Z$ exceeds a critical value. This instability is intimately related to the existence of a nontrivial solution to the zero mode equation with finite energy magnetic field. For more details on this physical problem, we refer the reader to \cite{Frohlich}.

The first explicit example of a zero mode was constructed  by Loss-Yau in \cite{Loss-Yau}. Since then, considerable effort has been devoted to understanding the structure of the equation \eqref{Zero}. For an overview of recent developments on zero mode equation, we refer the reader to   \cite{Frank} and the references therein.

In \cite{Frank} Frank-Loss addressed the question of absence of zero modes in a quantitative way, establishing a necessary condition on the size of a magnetic field $dA$ for the existence of a non-trivial zero mode $(\varphi,A)$. More precisely, if $(\varphi,A)$ is a zero mode in $\mathbb{R}^3$ with respect to Euclidean metric $dx^2$, they proved that 
\begin{equation}\label{notsharp}
\left(\int_{\mathbb{R}^3}|dA|^{\frac{3}{2}}dx\right)^{\frac{2}{3}}\geq 2S_3 ,
\end{equation}

where $S_3$ is the Sobolev constant in $\mathbb{R}^3$.

In the absence of explicit examples attaining equality, Frank-Loss in \cite{Frank}  raised the question of whether estimate \eqref{notsharp} is sharp. They conjectured that the sharp constant might be $4S_3$.

The main goal of this paper is to show that inequality \eqref{notsharp} is not optimal. Our result is the following

\begin{theorem}
Let $(M^n,g,\sigma)$ be a closed spin manifold of positive scalar curvature $s_g$. If $(\varphi,A)$ is a nontrivial zero mode then $$\parallel dA\parallel_{n/2}>Y(M^n,[g])/(4v_n^{1/2}),$$ where $Y(M^n,[g])$ is the Yamabe constant of $(M^n,g)$ and, $v_n=(n-1)/2$ if $n$ is odd and $v_n=n/2$ if $n$ is even.
\end{theorem}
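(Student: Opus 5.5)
Let $D_A := D_g - iA\cdot_g$ be the magnetic Dirac operator, so that $D_A\varphi=0$. The plan is a Lichnerowicz-type argument for $D_A$, combined with a refined Kato inequality and the conformal Yamabe quotient.

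\textbf{Step 1: Weitzenböck formula.} With the twisted connection $\nabla^A = \nabla - iA$, one has
\[
D_A^2 = (\nabla^A)^*\nabla^A + \frac{s_g}{4} - \frac{i}{2}\,dA\cdot_g .
\]
Here the Clifford action of the 2-form $dA$ is skew-Hermitian, so $-\tfrac{i}{2}dA\cdot$ is Hermitian. Pair this with $\varphi$ and integrate. Using $D_A\varphi=0$ gives
\[
0=\int_M |\nabla^A\varphi|^2+\frac{s_g}{4}|\varphi|^2 - \frac{1}{2}\langle i\,dA\cdot\varphi,\varphi\rangle .
\]

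\textbf{Step 2: Pointwise estimates.}

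\emph{Curvature term.} For a real 2-form $F$, the eigenvalues of $iF\cdot$ are $\pm\lambda_1\pm\dots\pm\lambda_{v}$, where $v=[n/2]$ is the number of $2\times2$ blocks of $F$. By Cauchy--Schwarz,
\[
|\langle iF\cdot\varphi,\varphi\rangle|\le \sum_j\lambda_j\,|\varphi|^2\le \sqrt{v_n}\,\big(\textstyle\sum\lambda_j^2\big)^{1/2}|\varphi|^2 .
\]
With the normalization $|F|^2=\sum\lambda_j^2$, this gives $|\langle i\,dA\cdot\varphi,\varphi\rangle|\le v_n^{1/2}|dA||\varphi|^2$. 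This is where $v_n^{1/2}$ enters.

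\emph{Kinetic term.} Since $\varphi$ is harmonic for a Dirac-type operator, use the refined Kato inequality $|\nabla^A\varphi|^2\ge \frac{n}{n-1}\,|d|\varphi||^2$. Set $u=|\varphi|^{(n-2)/(n-1)}$. A standard computation converts $\frac{n}{n-1}|d|\varphi||^2$ into $\frac{4(n-1)}{n-2}|du|^2\cdot c$ after multiplying by suitable powers; choose the exponent so that the left side becomes exactly $\frac14$ of the conformal Laplacian energy.

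A cleaner route is the Hijazi-type argument. Pass to the conformal metric $\bar g=u^{4/(n-2)}g$ with $u$ a power of $|\varphi|$, in which the spinor has constant length. The zero-mode equation is conformally covariant: $\bar A=A$ as a 1-form, and Clifford multiplication rescales consistently. So in $\bar g$ the integrated identity reads
\[
\frac{1}{4}\int \bar s\,|\bar\varphi|^2 \le \frac{\sqrt{v_n}}{2}\int |dA|_{\bar g}|\bar\varphi|^2 - \int|\bar\nabla^A\bar\varphi|^2 .
\]
Zeros of $\varphi$ must be handled by approximation, e.g.\ with $|\varphi|_\epsilon=(|\varphi|^2+\epsilon^2)^{1/2}$.

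\textbf{Step 3: Hölder and Yamabe.} In $\bar g$, normalize $|\bar\varphi|\equiv1$. Then
\[
\int \bar s\, dv_{\bar g}\ge Y\,\mathrm{Vol}(\bar g)^{(n-2)/n}.
\]
This holds because the Yamabe quotient of $\bar g$ equals $\int\bar s/\mathrm{Vol}^{(n-2)/n}$. On the other side,
\[
\int|dA|_{\bar g}\,dv_{\bar g}\le \|dA\|_{n/2}\,\mathrm{Vol}(\bar g)^{(n-2)/n}.
\]
Here $\|dA\|_{n/2}$ is conformally invariant for 2-forms, since $|F|_{\bar g}^{n/2}dv_{\bar g}=|F|_g^{n/2}dv_g$. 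Combining these gives
\[
\frac{Y}{4}\le \frac{\sqrt{v_n}}{2}\|dA\|_{n/2}\cdot(\text{constant}).
\]
The exact constant must come out as $Y/(4\sqrt{v_n})$. Reconciling the factor $2$ is the first calculation to check: it depends on whether $-\tfrac{i}{2}dA$ or $-\tfrac{i}{4}$-type normalizations appear, and on the Clifford norm convention.

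\textbf{Step 4: Strict inequality.} Equality would force $\bar\nabla^A\bar\varphi=0$. It would also force $\bar g$ to be Yamabe-minimizing and $|dA|$ constant with equality in the eigenvalue Cauchy--Schwarz, i.e.\ all $\lambda_j$ equal. From $\bar\nabla^A\bar\varphi=0$, the curvature identity $R^{A}\bar\varphi=0$ and its Clifford contraction give $\frac{\bar s}{4}\bar\varphi=\frac{i}{2}dA\cdot\bar\varphi$ together with Ricci-type constraints. I expect to show these are incompatible with $\bar s>0$ (for instance, parallel twisted spinors force $\mathrm{Ric}$ to be determined by $dA$ in a trace-free way), or with the Kato equality case. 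This rigidity step, together with the degenerate conformal change at zeros of $\varphi$, is the main obstacle. I would first try to exclude equality already at the Kato step, since equality in the refined Kato inequality for harmonic spinors is very restrictive.
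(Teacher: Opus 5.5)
Your route to the non-strict bound has the same skeleton as the paper's: integrated magnetic Weitzenb\"ock identity, the pointwise bound $i\langle dA\cdot\varphi,\varphi\rangle\le v_n^{1/2}|dA|\,|\varphi|^2$, a conformal change, then Yamabe and H\"older. The factor $2$ you worry about is only a convention mix. Your eigenvalue count $\pm\lambda_1\pm\dots\pm\lambda_{v_n}$ uses $dA\cdot=\sum_{j<k}dA(E_j,E_k)E_j\cdot E_k\cdot$. With that convention the Weitzenb\"ock term is $-i\,dA\cdot$; the $-\tfrac{i}{2}$ belongs to the full double sum. Done consistently, you get exactly $\|dA\|_{n/2}\ge Y/(4v_n^{1/2})$.

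\textbf{Main gap: strictness.} The theorem asserts a \emph{strict} inequality, and in Step 4 you only list consequences of equality and hope for Ricci-type rigidity. The missing idea is elementary. Equality forces $\bar\nabla^A\bar\varphi=0$, i.e.\ $\bar\nabla_X\bar\varphi=iA(X)\bar\varphi$. Then pointwise $|\bar\nabla\bar\varphi|^2=|A|^2|\bar\varphi|^2=|iA\cdot\bar\varphi|^2=|D_{\bar g}\bar\varphi|^2$. Integrating the Schr\"odinger--Lichnerowicz formula $D^2=\nabla^*\nabla+\frac{s}{4}$ gives $\int_M\bar s\,|\bar\varphi|^2\,dv_{\bar g}=0$. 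With $|\bar\varphi|\equiv1$ this contradicts $\int_M\bar s\,dv_{\bar g}\ge Y\,\mathrm{Vol}(\bar g)^{(n-2)/n}>0$. This is the paper's computation; it gauges to $\mathrm{div}A=0$ and writes $\nabla^{A*}\nabla^A\varphi=\nabla^*\nabla\varphi-|A|^2\varphi$, but the pointwise identity above suffices.

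\textbf{Second gap: zeros of $\varphi$.} You flag this but do not resolve it, and the paper avoids it by choosing a different conformal factor. It introduces the weighted eigenvalue $\lambda_1$ of $L_gu=\mu\,4i\frac{\langle dA\cdot\varphi,\varphi\rangle}{|\varphi|^2}u$ and shows it is conformally invariant. It then changes the metric by the positive first eigenfunction $u_1$, so that $s=4\lambda_1\,i\langle dA\cdot\psi,\psi\rangle/|\psi|^2$. The integral identity becomes $(\lambda_1-1)\int 4i\langle dA\cdot\psi,\psi\rangle=-\int 4|\nabla^A\psi|^2$. The integral $\int i\langle dA\cdot\psi,\psi\rangle\,dv$ is conformally invariant, and positive by the identity in the original metric where $s>0$; hence $\lambda_1\le 1$. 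Yamabe and H\"older then finish.

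Your $\bar g=|\varphi|^{4/(n-1)}g$ is the same argument with the test function $u=|\varphi|^{(n-2)/(n-1)}$ in place of $u_1$. That is legitimate only if $\varphi$ never vanishes. With your regularization $(|\varphi|^2+\epsilon^2)^{1/2}$ the spinor no longer has constant length, so ``Yamabe with test function $1$'' does not apply directly. The resulting error terms can be controlled using the refined Kato inequality $|\nabla^A\varphi|^2\ge\frac{n}{n-1}|d|\varphi||^2$, which also shows $|\varphi|^{(n-2)/(n-1)}\in H^1$; you mention it but then drop it. In the equality case you also need $\varphi$ to be zero-free before working in $\bar g$. This follows because equality makes $|\varphi|^{(n-2)/(n-1)}$ a Yamabe minimizer, hence smooth and positive. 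The eigenfunction choice sidesteps all of this, since $u_1>0$.
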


\begin{remark}
Since the zero mode equation \eqref{Zero} is conformally invariant the estimate \eqref{notsharp} also holds for zero modes on the round sphere $(\mathbb{S}^3,g_{can},\sigma_{can})$. Notice that $(4v_3^{1/2})^{-1}Y(\mathbb{S}^3,[g_{can}])=\frac{3}{2}vol(\mathbb{S}^3,g_{can})^{2/3}=2S_3$.
\end{remark}

\begin{remark}
In \cite{Dunne} Dunne-Min generalized to higher odd dimensions the zero mode $(\varphi,A)$ constructed by Loss-Yau. Frank-Loss \cite{Frank-Loss2} later proved that these zero modes are optimizers for the sharp inequality $$\parallel A\parallel_n^2\geq \frac{n}{(n-2)}S_n.$$
It would be interesting to investigate whether they also optimize a sharp inequality for $\parallel dA\parallel_{n/2}$.
\end{remark}

Regarding regularity, due to regularity results in \cite{Frank,Frank-Loss2,Wang} for zero modes $(\varphi,A)$, we assume throughout that $|A|\in L^n(M^n)$ and the spinor field $\varphi$ belongs to $L^p(M^n)$ for some $p>n/(n-1)$. 

The key ingredient in \cite{Frank} for the proof of \eqref{notsharp} is an improved version of the diamagnetic inequality $|\nabla|\varphi||^2\leq \frac{2}{3}|(-i\nabla-A)\varphi|^2$ for zero modes. Our proof follows a different approach. Our key ingredient is a new conformal weighted invariant
$$\lambda_1\left(4i\frac{\langle dA\cdot_{g}\varphi,\varphi\rangle_{g}}{|\varphi|_{g}^2},g\right),$$
defined as the first eigenvalue of the weighted linear  problem 
$$L_g(u)=\mu \;4i\frac{\langle dA\cdot_g\varphi,\varphi\rangle_g}{|\varphi|_g^2}u,$$
where $L_g$ is the conformal Laplacian for the metric $g$. Further details on this invariant are given in Section 3.

Recently, conformal weighted invariants with different choices of weight have been used to characterize equality in some sharp spinorial inequalities. For instance, the author in \cite{Julio} defined the conformal weighted invariant with respect to the weight $|\varphi|_g^{4/(n-1)}$. When $\varphi$ is a solution of the spinorial Yamabe equation
$$D_g\varphi=|\varphi|^{2/(n-1)}\varphi,$$
the author characterized equality in the sharp inequality $$\lambda^+_{min}(M^n,[g],\sigma)^2\geq \frac{n}{4(n-1)}Y(M^n,[g]).$$ 
Here $\lambda_{min}^+$ is the Bär-Hijazi-Lott invariant.

Very recently, Wang-Zhang in \cite{Wang} considered the conformal weighted invariant with respect to the weight $|A|_g^2$. When $(\varphi,A)$ is a zero mode, they characterized equality in $$\parallel A\parallel_n^2\geq \frac{n}{4(n-1)}Y(M^n,[g]).$$

The plan of this paper is as follows. In Section 2, we recall some background on spin manifolds and the magnetic Dirac operator. We also discuss the invariance properties of equation (1.1). In Section 3, we introduce the conformal weighted invariant $\lambda_1\left(4i\frac{\langle dA\cdot_{g}\varphi,\varphi\rangle_{g}}{|\varphi|_{g}^2},g\right)$ and provide the proof of Theorem 1.1.

\section{Preliminaries}
\subsection{Spin manifolds}
We recall some basic facts about spin manifolds. More details can be found in \cite{HijaziBook}

On a closed oriented Riemannian manifold $(M^n,g)$ we can define a $SO(n)$-principal bundle $P_{SO}M$ over $M$ of oriented $g-$orthonormal bases at $x\in M$. For $n\geq 3$, there exists the universal covering $\sigma:spin(n)\rightarrow SO(n)$ where $spin(n)$ is the group generated by even unit-length vector of $\mathbb{R}^n$ in the real Clifford algebra $Cl_n$ (the associative $\mathbb{R}-$algebra generated by relation $VW+WV=-2( V,W)$ for the Euclidean metric $(,)$). The manifold $M$ is called spin if there is a $spin(n)-$principal bundle $P_{spin}M$ over $M$ such that it is a double covering of $P_{SO}M$ whose restriction to each fiber is 
the double covering $\sigma:spin(n)\rightarrow SO(n)$. Such a double covering from $P_{spin}M$ to $P_{SO}M$, $\sigma$, is 
known as a spin structure. 

\medskip

There are  four special structures associated to a spin manifold $(M^n,g,\sigma)$:
\begin{enumerate}
\item A complex vector bundle $SM:=P_{spin}(M)\times_{\rho}\Sigma_n$ where $\rho:spin(n)\rightarrow Aut(\Sigma_n)$ is the restriction to $spin(n)$ of an irreducible representation $\rho:\mathbb{C}l_n\rightarrow End(\Sigma_n) $ of the complex Clifford algebra $\mathbb{C}l_n\simeq Cl_n\otimes_{\mathbb{R}} \mathbb{C}$,  $\Sigma_n\simeq\mathbb{C}^N$ and $N=2^{[n/2]}$.
\item  Clifford multiplication $m$ on $SM$ defined by
\begin{align*}
m:TM\times SM&\rightarrow SM\\
X\otimes\varphi&\mapsto X\cdot_g\varphi:=\rho(X)\varphi.
\end{align*} 
\item A Hermitian product $\langle\cdot,\cdot\rangle$ on sections of $SM$.

\item The Levi-Civita connection $\nabla$ on $SM$.
\end{enumerate}

These structures are compatible in the following sense:
\begin{align*}
\langle X\cdot\varphi,\psi\rangle&=-\langle\varphi,X\cdot\psi\rangle, \\
X(\langle\varphi,\psi\rangle)&=\langle\nabla_X\varphi,\psi\rangle+\langle\varphi,\nabla_X\psi\rangle,\\
\nabla_X(Y\cdot\varphi)&=\nabla_XY\cdot\varphi+Y\cdot\nabla_X\varphi,
\end{align*}
for all $X,Y\in\Gamma(TM)$ and $\varphi,\psi\in\Gamma(SM)$.
Given the Levi-Civita  connection $\nabla:\Gamma(SM)\rightarrow\Gamma(Hom(TM,SM))$ and identifying $\Gamma(Hom(TM,SM))$ with $\Gamma(TM\otimes SM)$, we can define the Dirac operator $D_g$ as the composition of $\nabla$ with the Clifford multiplication $m$ i.e. $D_g:=m\circ \nabla$. For a local orthonormal frame $\{E_j\}$ we have $$D_g\varphi=\sum\limits_{j=1}^nE_j\cdot_g\nabla_{E_j}\varphi.$$

For later purpose, we briefly recall some properties and identities of this operator.
 
For the square of Dirac operator we have the Schr\"odinger-Lichnerowicz formula
$$D_g^2=\nabla^*\nabla+\frac{1}{4}sc_gId_{\Gamma(SM)},$$
where $\nabla^*:\Gamma(Hom(TM,SM))\rightarrow\Gamma(SM)$ is the adjoint of the Levi-Civita connection $\nabla$.

There are also Schr\"odinger-Lichnerowicz formulas for different variation of the operator $D_g$. Let $A$ be a smooth real one form on $M^n$. The magnetic Dirac operator with magnetic potential $A$ is defined by $$D_g-iA\cdot_g:\Gamma(SM)\rightarrow\Gamma(SM)$$
and the spinorial magnetic connection Laplacian is defined as
$$\nabla^{A*}\nabla^A:\Gamma(SM)\rightarrow\Gamma(SM),$$
where $\nabla^A:=\nabla-iA:\Gamma(SM)\rightarrow\Gamma(TM^*\otimes SM)$ and the action of $A$ on $\varphi$ is given by $A\otimes\varphi$.

The connection $\nabla^A$ is metric, so in particular
$$\frac{1}{2}\Delta|\varphi|^2=|\nabla^A\varphi|^2-\langle\nabla^{A*}\nabla^A\varphi,\varphi\rangle$$
holds.

Finally, the square of the magnetic Dirac operator satisfies (see for instance \cite{Nadine})
\begin{equation}\label{squaremagnetic}
\left( D_g-iA\cdot_g\right)^*\left( D_g-iA\cdot_g\right)=\nabla^{A*}\nabla^A+\frac{s_g}{4}-idA\cdot_g,
\end{equation}
where the action of $dA\cdot_g$ is locally defined as $$dA\cdot_g\varphi=\sum\limits_{j<k}dA(E_j,E_k)E_j\cdot_gE_k\cdot_g\varphi.$$

\subsection{Invariance properties}
In this section we recall some conformal transformations of the Dirac operator and the zero mode equation \eqref{Zero}.

Consider a conformal metric $g$, expressed as $g=h^{4/(n-2)}g_0$ for a smooth positive function $h$. There is a canonical isomorphism $F$ from  $S(M,g_0,\sigma)$ to $S(M,g,\sigma)$ which is a fiberwise isometry and preserves Clifford multiplication. In particular, for any one form $A$ the action $A\cdot_{g_0}\varphi$ is locally given by $A\cdot_{g_0}\varphi=\sum_{j}A(E_j)E_j\cdot_{g_0}\varphi$ for a local $g_0-$orthonormal frame. Then we have
$$F(A\cdot_{g_0}\varphi)=\sum_j A(E_j)F(E_j\cdot_{g_0}\varphi)=\sum_jA(E_j)(h^{\frac{-2}{n-2}}E_j)\cdot_{g}F(\varphi).$$
Thus $$F(A\cdot_{g_0}\varphi)=h^{\frac{2}{n-2}}A\cdot_gF(\varphi).$$

Under the isomorphism $F$, the Dirac operators corresponding to $g_0$ and $g$ are related by the formula
$$D_g(F(h^{-\frac{(n-1)}{n-2}}\varphi))=F(h^{-\frac{n+1}{n-2}}D_{g_0}\varphi) .$$

From this formula one can verify that the zero mode equation is conformally invariant. Assume $(\varphi,A)$ is a solution of equation with respect to the metric $g_0$.
Let $\psi:=h^{-(n-1)/(n-2)}F(\varphi)$. Then the pair $(\psi,A)$ is a solution of the equation with respect to metric $g$. Indeed,
\begin{align*}
D_g\psi&=F(h^{-\frac{n+1}{n-2}}D_{g_0}\varphi)\\
&=ih^{-\frac{n+1}{n-2}}F(A\cdot_{g_0}\varphi)=ih^{-\frac{n+1}{n-2}}\left(h^{\frac{2}{n-2}}A\cdot_gF(\varphi)\right)=iA\cdot_g\psi.
\end{align*}
Moreover, the $n/2$-norm for the 2-form $dA$ is the same for both metrics $g$ and $g_0$. Indeed, $$\int_M|dA|_g^{n/2}dv_g=\int_M\left(h^{\frac{-4}{n-2}}|dA|_{g_0}\right)^{n/2}dv_g=\int_M|dA|_{g_0}^{n/2}(h^{\frac{-2n}{n-2}}dv_g)=\int_M|dA|_{g_0}^{n/2}dv_{g_0}.$$

Another important invariance of the equation comes from gauge transformations. If $(\varphi,A)$ is a solution then $(e^{if}\varphi,A+df)$ is also a solution for any function $f$. So, without loss of generality we may assume that $div(A)=0$.

We summarize the previous discussion as follows:
\begin{lemma}
\begin{enumerate}
\item[(1)]The zero mode equation and the $n/2-$norm of the exterior derivative of the corresponding magnetic potential are conformally invariant.
\item[(2)]The zero mode equation is gauge invariant. In particular we may assume the magnetic potential  has vanishing  divergence.
\end{enumerate}
\end{lemma}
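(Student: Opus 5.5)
The statement just above is Lemma 2.1; the discussion preceding it already contains the computations, so the proof consists of organizing them.

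\emph{Part (1): conformal invariance.} I will fix $g=h^{4/(n-2)}g_0$ and use the canonical fiberwise isometry $F$, which preserves Clifford multiplication.

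First, I establish the scaling of one-forms under $F$. A $g$-orthonormal frame is $h^{-2/(n-2)}E_j$, where $\{E_j\}$ is $g_0$-orthonormal. Expanding $A\cdot_{g}$ in this frame gives
\[
F(A\cdot_{g_0}\varphi)=h^{2/(n-2)}A\cdot_g F(\varphi).
\]

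Second, I combine this with the conformal covariance of the Dirac operator,
\[
D_g\bigl(F(h^{-(n-1)/(n-2)}\varphi)\bigr)=F\bigl(h^{-(n+1)/(n-2)}D_{g_0}\varphi\bigr),
\]
which I take from the literature (e.g.\ \cite{HijaziBook}). Setting $\psi=h^{-(n-1)/(n-2)}F(\varphi)$, the two powers of $h$ cancel exactly: $-(n+1)/(n-2)+2/(n-2)=-(n-1)/(n-2)$. Hence $D_{g_0}\varphi=iA\cdot_{g_0}\varphi$ is equivalent to $D_g\psi=iA\cdot_g\psi$.

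Third, for the norm of $dA$, a two-form satisfies $|dA|_g=h^{-4/(n-2)}|dA|_{g_0}$ and $dv_g=h^{2n/(n-2)}dv_{g_0}$. Raising the first identity to the power $n/2$ produces $h^{-2n/(n-2)}$, which cancels the volume factor, so $\|dA\|_{n/2}$ is the same for $g$ and $g_0$. The only delicate point in this part is bookkeeping of the exponents. I would also note that the regularity class ($|A|\in L^n$, $\varphi\in L^p$) is preserved, since $h$ is smooth and positive on a closed manifold.

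\emph{Part (2): gauge invariance.} I take a real function $f$ and use the Leibniz rule for the Dirac operator,
\[
D_g(e^{if}\varphi)=i\,e^{if}\,df\cdot\varphi+e^{if}D_g\varphi .
\]
Substituting $D_g\varphi=iA\cdot\varphi$ gives
\[
D_g(e^{if}\varphi)=i(A+df)\cdot(e^{if}\varphi),
\]
so $(e^{if}\varphi,A+df)$ is again a zero mode, and $d(A+df)=dA$.

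To reach divergence zero, I choose $f$ solving $\Delta f=-\operatorname{div}A$. This equation is solvable on the closed manifold because $\int_M\operatorname{div}A\,dv_g=0$ by the divergence theorem, so the right-hand side is orthogonal to the constants, which span the kernel of the Laplacian. With this choice, $\operatorname{div}(A+df)=\operatorname{div}A+\Delta f=0$ for the sign convention $\Delta=\operatorname{div}\nabla$.

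The main obstacle is in this gauge step when $A$ is only in $L^n$. The solution $f$ is then defined only in a weak sense, and one must check that $e^{if}\varphi$ stays in the same regularity class. I would handle this by elliptic $L^p$ theory: it gives $df\in L^n$, so $A+df\in L^n$. The requirement $\varphi\in L^p$ is unaffected, since $|e^{if}\varphi|=|\varphi|$ pointwise.
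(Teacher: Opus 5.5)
Your proposal is correct and follows essentially the same route as the paper. Both use the same ingredients: the scaling $F(A\cdot_{g_0}\varphi)=h^{2/(n-2)}A\cdot_gF(\varphi)$, the conformal covariance of $D_g$ with $\psi=h^{-(n-1)/(n-2)}F(\varphi)$, the cancellation $|dA|_g^{n/2}dv_g=|dA|_{g_0}^{n/2}dv_{g_0}$, and the gauge transformation $(e^{if}\varphi,A+df)$. You go slightly beyond the paper by justifying the Coulomb-gauge step: you solve $\Delta f=-\operatorname{div}A$ using $\int_M\operatorname{div}A\,dv_g=0$, and you comment on regularity, whereas the paper simply asserts that one may assume $\operatorname{div}A=0$.
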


\section{Proof of the inequality}
In this section we prove the main inequality.  First, we establish some technical facts needed for the proof of Theorem 1.1.

We begin with the following integral identity

\begin{lemma}
For any nontrivial zero mode $(\varphi,A)$ we have
\begin{equation}\label{integral1}
0=\int_M|\varphi|_g^2\left\lbrace\frac{s_g}{4}-i\frac{\langle dA\cdot_g\varphi,\varphi\rangle_g}{|\varphi|_g^2}\right\rbrace dv_g+\int_M|\nabla^A \varphi|_gdv_g. 
\end{equation}

\end{lemma}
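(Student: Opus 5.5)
Pair the zero mode equation with $\varphi$ and feed it into the Schr\"odinger--Lichnerowicz formula \eqref{squaremagnetic}. Since $(\varphi,A)$ is a zero mode, $(D_g-iA\cdot_g)\varphi=0$, and therefore also $(D_g-iA\cdot_g)^*(D_g-iA\cdot_g)\varphi=0$. By \eqref{squaremagnetic} this gives the pointwise identity
$$\nabla^{A*}\nabla^A\varphi+\frac{s_g}{4}\varphi-idA\cdot_g\varphi=0.$$
We take the Hermitian product with $\varphi$ and integrate over the closed manifold $M$.

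Next, we handle the connection Laplacian term using that $\nabla^A$ is metric. From $\frac12\Delta|\varphi|^2=|\nabla^A\varphi|^2-\langle\nabla^{A*}\nabla^A\varphi,\varphi\rangle$ and $\int_M\Delta|\varphi|^2\,dv_g=0$ (divergence theorem on a closed manifold), we get
$$\int_M\langle\nabla^{A*}\nabla^A\varphi,\varphi\rangle\,dv_g=\int_M|\nabla^A\varphi|^2\,dv_g.$$
Writing $\frac{s_g}{4}|\varphi|^2-i\langle dA\cdot\varphi,\varphi\rangle=|\varphi|^2\{\frac{s_g}{4}-i\langle dA\cdot\varphi,\varphi\rangle/|\varphi|^2\}$, which is meaningful wherever $\varphi\neq0$ and defined as $0$ on the zero set, yields \eqref{integral1}. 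The last integrand should be $|\nabla^A\varphi|^2$; the missing square in the statement is presumably a typo. We also check that $i\langle dA\cdot\varphi,\varphi\rangle$ is real. Indeed $E_j\cdot E_k\cdot$ for $j<k$ is skew-Hermitian, because each factor is skew and they anticommute. Hence $\langle dA\cdot\varphi,\varphi\rangle$ is purely imaginary, and every term in the identity is real.

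The main obstacle is regularity. The formal computation needs $\varphi\in H^1$ and the integration by parts to be justified, whereas we only assume $|A|\in L^n$ and $\varphi\in L^p$ with $p>n/(n-1)$. My first approach is to invoke the regularity results cited (\cite{Frank,Frank-Loss2,Wang}), or use elliptic bootstrapping for $D_g\varphi=iA\cdot\varphi$ with $A\in L^n$, to obtain $\varphi\in L^\infty\cap H^1$. It may also help to use the gauge normalization $\mathrm{div}A=0$ from Lemma 2.1. If needed, one can instead approximate by integrating against a cutoff, or work with smooth $A$ (the zero mode is then smooth by elliptic regularity) and pass to the limit.
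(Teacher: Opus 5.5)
Your proposal is correct and follows essentially the same route as the paper: use the magnetic Lichnerowicz formula \eqref{squaremagnetic} with $(D_g-iA\cdot_g)\varphi=0$ to get $-\nabla^{A*}\nabla^A\varphi=\frac{s_g}{4}\varphi-idA\cdot_g\varphi$, pair with $\varphi$, and integrate the metric-connection identity for $\frac12\Delta|\varphi|^2$ over the closed manifold. You are also right that the last integrand should be $|\nabla^A\varphi|^2$, and your realness check and regularity discussion go beyond the paper, which tacitly works with smooth $A$ and $\varphi$ here.
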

 
\begin{proof}
Since $\nabla^A=\nabla -iA$ is a metric connection we have $$0=\int_M-\Delta|\varphi|^2/2+\int_M|\nabla^A\varphi|^2u-\int_M\langle\nabla^{A*}\nabla^A\varphi,\varphi\rangle.$$

By the Lichnerowicz formula \eqref{squaremagnetic} $$(D-iA\cdot)^*(D-iA\cdot)=\nabla^{A*}\nabla^A+s_g/4-idA\cdot$$
and from the zero mode equation, it follows that $$-\nabla^{A*}\nabla^A\varphi=\frac{s_g}{4}\varphi-idA\cdot\varphi.$$
Hence the lemma follows.
\end{proof}

Now we consider the weighted linear  problem $$-a_n\Delta u+s_gu=4i\frac{\langle dA\cdot_g\varphi,\varphi\rangle_g}{|\varphi|_g^2}u.$$ 
Here $a_n=4(n-1)/(n-2)$.

From the previous integral identity, the real function $i\frac{\langle dA\cdot\varphi,\varphi\rangle}{|\varphi|^2}$  is positive on a set of positive measure. 
 
Therefore,the first positive eigenvalue of the weighted linear problem exists. We denote this eigenvalue as $\lambda_1\left(4i\frac{\langle dA\cdot_g\varphi,\varphi\rangle_g}{|\varphi|_g^2},g\right)$.

\begin{lemma}
The quantity $$\lambda_1\left(4i\frac{\langle dA\cdot_{g_0}\varphi,\varphi\rangle_{g_0}}{|\varphi|_{g_0}^2},g_0\right)$$ is conformally invariant.
\end{lemma}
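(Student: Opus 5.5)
We show that both sides of the weighted eigenvalue problem transform by the same power of the conformal factor, so the variational characterization of $\lambda_1$ is unchanged. Let $g=h^{4/(n-2)}g_0$ and let $(\psi,A)$ be the transformed zero mode from Section 2, with $\psi=h^{-(n-1)/(n-2)}F(\varphi)$. Write the weight as
$$W_g:=4i\frac{\langle dA\cdot_g\psi,\psi\rangle_g}{|\psi|_g^2},\qquad W_{g_0}:=4i\frac{\langle dA\cdot_{g_0}\varphi,\varphi\rangle_{g_0}}{|\varphi|_{g_0}^2}.$$
We also use the variational characterization
$$\lambda_1(W,g)=\inf\left\{\frac{\int_M\left(a_n|\nabla u|_g^2+s_gu^2\right)dv_g}{\int_M W u^2\,dv_g}\;:\;\int_M Wu^2\,dv_g>0\right\},$$
which makes sense because $s_g>0$ and, by Lemma 3.1, $W$ is positive on a set of positive measure.

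\textbf{Step 1: transformation of the weight.} We compute $W_g$ in terms of $W_{g_0}$. Since $dA\cdot_{g_0}\varphi=\sum_{j<k}dA(E_j,E_k)E_j\cdot_{g_0}E_k\cdot_{g_0}\varphi$ for a $g_0$-orthonormal frame, and $F$ preserves Clifford multiplication with $g$-orthonormal frame $h^{-2/(n-2)}E_j$, the same argument as for one-forms gives
$$F(dA\cdot_{g_0}\varphi)=h^{4/(n-2)}\,dA\cdot_g F(\varphi).$$
Because $F$ is a fiberwise isometry, the powers $h^{-(n-1)/(n-2)}$ in $\psi$ cancel between numerator and denominator, and
$$W_g=h^{-4/(n-2)}W_{g_0}.$$
This is consistent with $|dA|_g=h^{-4/(n-2)}|dA|_{g_0}$.

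\textbf{Step 2: transformation of the Rayleigh quotient.} We use the conformal covariance of $L_g=-a_n\Delta_g+s_g$:
$$L_g(h^{-1}u)=h^{-\frac{n+2}{n-2}}L_{g_0}u,$$
together with $dv_g=h^{2n/(n-2)}dv_{g_0}$. Taking $v=h^{-1}u$, the numerator becomes
$$\int_M vL_gv\,dv_g=\int_M uL_{g_0}u\,dv_{g_0}.$$
The denominator becomes
$$\int_M W_g v^2\,dv_g=\int_M h^{-4/(n-2)}W_{g_0}h^{-2}u^2h^{2n/(n-2)}\,dv_{g_0}=\int_M W_{g_0}u^2\,dv_{g_0},$$
since $-\tfrac{4}{n-2}-2+\tfrac{2n}{n-2}=0$.

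\textbf{Step 3: conclusion.} The map $u\mapsto h^{-1}u$ is a bijection of admissible test functions that preserves the Rayleigh quotient, so the infima coincide. Equivalently, $u$ is an eigenfunction for $(W_{g_0},g_0)$ with eigenvalue $\mu$ if and only if $h^{-1}u$ is one for $(W_g,g)$ with the same $\mu$.

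The main obstacle is Step 1: checking carefully that $F$ intertwines the two-form action with exactly the factor $h^{4/(n-2)}$, and that the regularity assumptions ($\varphi\in L^p$, $A\in L^n$) make $W$ well defined and the quotient finite. I would handle the second point by noting that $|W|\le C|dA|$ wherever $\varphi\neq0$, so $W\in L^{n/2}$ and the denominator is controlled by Sobolev. The set $\{\varphi=0\}$ would be treated as a null set via unique continuation for the magnetic Dirac operator.
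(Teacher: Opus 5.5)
Your proposal is correct and is essentially the paper's proof. The weight scales as $W_g=h^{-4/(n-2)}W_{g_0}$, conformal covariance of $L_g$ makes the numerator invariant, and the substitution $v=h^{-1}u$ identifies the two Rayleigh quotients so the infima agree; the paper writes this as $I_{(g,\psi,A)}(u)=I_{(g_0,\varphi,A)}(hu)$. Restricting to test functions with $\int_M Wu^2\,dv>0$ and noting the eigenfunction correspondence are small refinements the paper leaves implicit. One minor point: $s_g>0$ need not hold for the conformal metric $g$; what is conformally invariant, and all you need, is positivity of the quadratic form of $L_g$.
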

\begin{proof}
Let $g=h^{4/(n-2)}g_0$ and let $F$ be the canonical isomorphism from $S(M,g_0,\sigma)$ to $S(m,g,\sigma)$ from Section 2.
We will prove that $\lambda_1$ with respect to the metric $g_0$ equals that with respect to the metric $g$.

Assume $(\varphi,A)$ is a nontrivial solution of the zero mode equation for the metric $g_0$. As in Section 2, the pair $(\psi,A)$ defined as $\psi=h^{-(n-1)/(n-2)}F(\varphi)$ is a solution of the equation with respect to the metric $g$.

We define the functional
$$I_{(g,\psi,A)}(u)=\dfrac{\int_M(a_n|\nabla u|_g^2+s_gu^2)dv_g}{4i\int_M \frac{\langle dA\cdot_g\psi,\psi\rangle_g}{|\psi|_g^2}  u^2dv_g  }.$$

First note that by the conformal transformation of the conformal Laplacian $$\int_M(a_n|\nabla u|_g^2+s_gu^2)dv_g=\int_M L_g(u)u\;dv_g=\int_Mh^{-(n+2)/(n-2)}L_{g_0}(hu)u\;dv_{g}.$$
Hence $$\int_M(a_n|\nabla u|_g^2+s_gu^2)dv_g=\int_ML_{g_0}(hu)hu\;dv_{g_0}$$
Second, $$|\psi|_{g}^2=h^{-2(n-1)/(n-2)}|\varphi|_{g_0}^2.$$

For a local $g_0-$orthonormal frame $\{E_j\}$  we consider the corresponding local $g-$orthonormal frame $\{\bar{E_j}:=h^{-2/(n-2)}E_j\}.$

Thus 
$$
dA\cdot_{g}\psi=\sum\limits_{j<k}dA(\bar{E_j},\bar{E_k})\bar{E_j}\cdot_{g} \bar{E_k}\cdot_{g}\psi.
$$
Since $$\langle \bar{E_j}\cdot_{g} \bar{E_k}\cdot_{g}F(\varphi),F(\varphi)\rangle_g=\langle E_j\cdot_{g_0} E_k\cdot_{g_0}\varphi,\varphi\rangle_{g_0}$$
we obtain $$\langle dA\cdot_{g}\psi,\psi\rangle_g=h^{-2(n-1)/(n-2)}\sum\limits_{j<k}dA(\bar{E_j},\bar{E_k})\langle E_j\cdot_{g_0} E_k\cdot_{g_0}\varphi,\varphi\rangle_{g_0}.$$
Then $$\langle dA\cdot_{g}\psi,\psi\rangle_g=h^{\frac{-4-2(n-1)}{n-2}}\langle dA\cdot_{g_0}\varphi,\varphi\rangle_{g_0}.$$
From this, we obtain 
$$\int_M \frac{\langle dA\cdot_g\psi,\psi\rangle_g}{|\psi|_g^2}  u^2dv_g =\int_M h^{-4/(n-2)}\frac{\langle dA\cdot_{g_0}\varphi,\varphi\rangle_{g_0}}{|\varphi|_{g_0}^2}  udv_{g}=\int_M \frac{\langle dA\cdot_{g_0}\varphi,\varphi\rangle_{g_0}}{|\varphi|_{g_0}^2}  (hu)^2dv_{g_0} $$
Therefore $$I_{(g,\psi,A)}(u)=I_{(g_0,\varphi,A)}(hu).$$

The lemma follows by taking the infimum in the previous equality. 
\end{proof}

\begin{claim}
$$\lambda_1\left(4i\frac{\langle dA\cdot_{g_0}\varphi,\varphi\rangle_{g_0}}{|\varphi|_{g_0}^2},g_0\right)\leq 1.$$
\end{claim}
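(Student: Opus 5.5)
We plan to bound $\lambda_1$ from above by evaluating the functional $I_{(g_0,\varphi,A)}$ on a well-chosen test function. Since $\lambda_1$ is the infimum of $I_{(g_0,\varphi,A)}(u)$ over admissible $u$ with positive denominator, it is enough to find one $u$ with $I(u)\le 1$. The integral identity \eqref{integral1} suggests the choice $u=|\varphi|_{g_0}$, because then the denominator becomes
$$4i\int_M\langle dA\cdot_{g_0}\varphi,\varphi\rangle_{g_0}\,dv_{g_0},$$
and the scalar curvature term in the numerator becomes $\int_M s_{g_0}|\varphi|^2$. These are exactly the quantities appearing in \eqref{integral1}, which (with $|\nabla^A\varphi|^2$ in place of the evident typo) reads
$$4i\int_M\langle dA\cdot\varphi,\varphi\rangle\,dv=\int_M s\,|\varphi|^2\,dv+4\int_M|\nabla^A\varphi|^2\,dv .$$
In particular the denominator is positive, since $\varphi$ is nontrivial and the scalar curvature is positive.

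The remaining task is to compare the gradient terms, namely to prove
$$a_n\int_M|\nabla|\varphi||^2\le 4\int_M|\nabla^A\varphi|^2 .$$
For this we will use the refined Kato inequality for the magnetic Dirac operator. Since $\nabla^A$ is metric, $|\nabla|\varphi||\le|\nabla^A\varphi|$ holds away from the zeros of $\varphi$. Moreover, $\varphi$ lies in the kernel of the twisted Dirac operator $D-iA\cdot$, which is the Dirac operator of the connection $\nabla^A$ on the twisted bundle $SM\otimes L$. For such harmonic spinors the refined Kato inequality gives
$$|\nabla|\varphi||^2\le\frac{n-1}{n}\,|\nabla^A\varphi|^2 .$$
This follows from the standard pointwise argument: decompose $\nabla^A\varphi$ into its Clifford-trace part, which vanishes, and the twistor part, and use the fact that the kernel of Clifford multiplication constrains the direction of $\nabla|\varphi|$.

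With this bound the numerator is at most $a_n\frac{n-1}{n}\int|\nabla^A\varphi|^2+\int s|\varphi|^2$, and the denominator equals $4\int|\nabla^A\varphi|^2+\int s|\varphi|^2$. The inequality $I(u)\le1$ then reduces to $a_n\frac{n-1}{n}\le4$, that is, $\frac{(n-1)^2}{n(n-2)}\le1$. But $(n-1)^2=n^2-2n+1>n(n-2)$, so this fails by a small margin.

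This is the main obstacle. Plain Kato, or even refined Kato, does not close the inequality with the constant $a_n$. The remedy we will try first is the test function $u=|\varphi|^{\alpha}$ with a suitable exponent $\alpha$, say $\alpha=\frac{n-2}{n-1}$. Doing so requires redoing the Bochner identity with the weight $|\varphi|^{2\alpha-2}$: we multiply $\tfrac12\Delta|\varphi|^2=|\nabla^A\varphi|^2-\langle\nabla^{A*}\nabla^A\varphi,\varphi\rangle$ by $|\varphi|^{2\alpha-2}$ and integrate by parts. This produces an extra term $(2\alpha-2)\int|\varphi|^{2\alpha-2}|\nabla|\varphi||^2$. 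We then need to check that the identity combined with refined Kato gives $a_n\alpha^2\int|\varphi|^{2\alpha-2}|\nabla|\varphi||^2\le 4\int|\varphi|^{2\alpha-2}\bigl(|\nabla^A\varphi|^2-(1-\alpha)|\nabla|\varphi||^2\bigr)\cdot(\dots)$. The choice $\alpha=\frac{n-2}{n-1}$ is the one that makes the constants match exactly. Low regularity at the zero set of $\varphi$ will be handled by working with $(|\varphi|^2+\varepsilon)^{\alpha/2}$ and letting $\varepsilon\to0$, and the strictness needed later should come from the equality case of refined Kato.
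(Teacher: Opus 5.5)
Your plan works and closes the claim, but by a genuinely different route from the paper.

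One correction to your last display. The extra term you computed, $(2\alpha-2)\int|\varphi|^{2\alpha-2}|\nabla|\varphi||^2$, becomes $-8(1-\alpha)\int|\varphi|^{2\alpha-2}|\nabla|\varphi||^2$ after multiplying by $4$. So the bracket should read $|\nabla^A\varphi|^2-2(1-\alpha)|\nabla|\varphi||^2$, with no further factor.

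With that fix, the pointwise inequality you need is $\bigl(a_n\alpha^2+8(1-\alpha)\bigr)|\nabla|\varphi||^2\le 4|\nabla^A\varphi|^2$. At $\alpha=\frac{n-2}{n-1}$ one has $a_n\alpha^2+8(1-\alpha)=\frac{4n}{n-1}$. So the requirement is exactly the refined Kato inequality $|\nabla|\varphi||^2\le\frac{n-1}{n}|\nabla^A\varphi|^2$. That inequality holds because $D-iA\cdot=\sum_j E_j\cdot\nabla^A_{E_j}$, and $\nabla^A$ is metric and Clifford-compatible.

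Two further details:
\begin{enumerate}
\item For $\alpha\neq 1$, positivity of the denominator $\int w|\varphi|^{2\alpha}$ (with $w=4i\langle dA\cdot\varphi,\varphi\rangle/|\varphi|^2$) can no longer be read off from \eqref{integral1}. The same computation shows the denominator dominates the numerator, which is at least $\int s|\varphi|^{2\alpha}>0$.
\item The $\varepsilon$-regularization closes without trouble. The discrepancies between the regularized Rayleigh quotient and the identity weighted by $(|\varphi|^2+\varepsilon)^{\alpha-1}$ are $O(\varepsilon^{\alpha})$.
\end{enumerate}

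The paper's argument avoids any Kato inequality. It conformally changes by the first eigenfunction, $g=u_1^{4/(n-2)}g_0$, so that $s_g=\lambda_1\cdot 4i\langle dA\cdot_g\psi,\psi\rangle_g/|\psi|_g^2$ for the transformed zero mode $\psi$. Then \eqref{integral1} in the metric $g$ becomes \eqref{integral2}: $(\lambda_1-1)\int 4i\langle dA\cdot_g\psi,\psi\rangle_g=-4\int|\nabla^A\psi|^2$. This is shorter.

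However, concluding $\lambda_1\le 1$ from it tacitly requires $\int_M 4i\langle dA\cdot_g\psi,\psi\rangle_g\,dv_g>0$. That integral equals $\lambda_1^{-1}\int_M s_g|\psi|_g^2\,dv_g$, and $s_g$ carries the possibly indefinite sign of the weight. So this positivity is not automatic, and the paper does not address it.

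Your route bounds the infimum directly by a Rayleigh quotient, so no sign question arises. The cost is the refined Kato inequality, the same ingredient as Frank--Loss's improved diamagnetic inequality ($\frac23=\frac{n-1}{n}$ for $n=3$), plus the regularization at the zero set. Your test function $|\varphi|^{(n-2)/(n-1)}$ corresponds to Hijazi's conformal factor $|\varphi|^{4/(n-1)}$. The exact match of constants shows that $1$ is precisely the bound refined Kato permits.
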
 
\begin{proof}
Let $u_1$ be the first eigenfunction associated to $\lambda_1\left(4i\frac{\langle dA\cdot_{g_0}\varphi,\varphi\rangle_{g_0}}{|\varphi|_{g_0}^2},g_0\right)$. 

To simplify notation, set $\lambda_1:=\lambda_1\left(4i\frac{\langle dA\cdot_{g_0}\varphi,\varphi\rangle_{g_0}}{|\varphi|_{g_0}^2},g_0\right)$.

Computing the scalar curvature of the metric $g=u_1^{4/(n-2)}g_0$ we have
$$s_{g}u_1^{4/(n-2)}=u_1^{-1}(-a_n\Delta u_1+s_{g_0}u_1)=\lambda_1\cdot \left(4i\frac{\langle dA\cdot_{g_0}\varphi,\varphi\rangle_{g_0}}{|\varphi|_{g_0}^2}\right).$$

So $$s_{g}=\lambda_1\cdot \left(u_1^{-4/(n-2)}4i\frac{\langle dA\cdot_{g_0}\varphi,\varphi\rangle_{g_0}}{|\varphi|_{g_0}^2}\right)=\lambda_1\cdot \left(4i\frac{\langle dA\cdot_{g}\psi,\psi\rangle_{g}}{|\psi|_{g}^2}\right),$$
where $\psi=u_1^{-(n-1)/(n-2)}F(\varphi)$.

Applying the integral identity \eqref{integral1} to the pair $(\psi,A)$ in the metric $g$ yields
\begin{equation}\label{integral2}
0=(\lambda_1-1)\int_M 4i\langle dA\cdot_g\psi,\psi\rangle_g +\int_M4|\nabla^{A}\psi|^2.
\end{equation}

If $\lambda_1-1>0$ then $\psi$ is trivial (and therefore $\varphi=0$).
\end{proof} 

The final fact needed for this preparatory part is the pointwise estimate: $$i\left\langle dA\cdot\varphi,\varphi\right\rangle\leq \left[\frac{n}{2}\right]^{1/2}|dA|\;|\varphi|^2.$$ 
Indeed, $$|i\left\langle dA\cdot\varphi,\varphi\right\rangle|\leq
\sum_{j<k}|dA(E_j,E_k)|\;\;|\langle E_j\cdot E_k\cdot\varphi,\varphi\rangle|. $$
On one hand, by Cauchy-Schwarz inequality we have $$|\langle E_j\cdot E_k\cdot\varphi,\varphi\rangle|\leq |\varphi|^2.$$
On the other hand, at a point $\left(dA(E_j,E_k)\right)$ is an antisymmetric matrix.  Hence we may choose the orthonormal frame $\{E_j\}$ such that this matrix is block diagonal with $\left[\frac{n}{2}\right]$ blocks of size $2\times 2$.
. The eigenvalues of each $2\times2$ block are purely imaginary. Hence $$\sum_{j<k}|dA(E_j,E_k)|\leq \left[\frac{n}{2}\right]^{1/2}\left(\sum_{j<k}|dA(E_j,E_k)|^2\right)^{1/2}=\left[\frac{n}{2}\right]^{1/2}|dA|.$$
 
Now we are in position to prove our inequality
\begin{proof}[Proof of Theorem 1.1]
The estimate $$i\left\langle dA\cdot\varphi,\varphi\right\rangle\leq \left[\frac{n}{2}\right]^{1/2}|dA|\;|\varphi|^2$$
together with Claim 1 implies that
$$1\geq \lambda_1\left(4i\frac{\langle dA\cdot\varphi,\varphi\rangle}{|\varphi|^2},g\right)\geq \inf_{u\in H^1(M)} \dfrac{\int_M a_n|\nabla u|^2+s_gu^2)dv_g}{\int_M 4\left[\frac{n}{2}\right]^{1/2}|dA|u^2dv_g}.$$
Let $v_n=\left[\frac{n}{2}\right].$

Applying the Hölder's inequality to the term $\int_M |dA|u^2dv_g$ we obtain $$\left(\int_M |dA|^{n/2}dv_g\right)^{2/n}\geq \dfrac{1}{4v_n^{1/2}}\inf_{u\in H^1(M)} \dfrac{\int_M a_n|\nabla u|^2+s_gu^2)dv_g}{\left(\int_M u^{2n/(n-2)}dv_g\right)^{(n-2)/n}}=\dfrac{1}{4v_n^{1/2}}Y(M^n,[g]). $$ 
Hence $$\left(\int_M |dA|^{n/2}dv_g\right)^{2/n}\geq\dfrac{1}{4v_n^{1/2}}Y(M^n,[g]).$$

Finally, we show that equality cannot hold.

By gauge invariance of the equation, we assume that $div(A)=0$.
Equality $$\left(\int_M |dA|^{n/2}dv_g\right)^{2/n}=\dfrac{1}{4v_n^{1/2}}Y(M^n,[g])$$
implies $$\lambda_1\left(4i\frac{\langle dA\cdot\varphi,\varphi\rangle}{|\varphi|^2},g\right)=1.$$
From integral identity \eqref{integral2} $$|\nabla^A\varphi|=0$$
and hence $\nabla\varphi=iA\otimes\varphi$.

Computing the magnetic Laplacian $\nabla^{A*}\nabla^A\varphi$ we get \begin{align*}
0&=\nabla^{A*}\nabla^A\varphi\\
&=\nabla^*\nabla \varphi+2i\nabla_A\varphi+idiv(A)\varphi+|A|^2\varphi\\
&=\nabla^*\nabla\varphi-|A|^2\varphi.
\end{align*}
Thus
$$|A|^2\varphi=\nabla^*\nabla\varphi=D^2\varphi-(s_g/4)\varphi.$$
and therefore $$\int_M|A|^2|\varphi|^2=\int_M|D\varphi|^2-\int_M\frac{s_g}{4}|\varphi|^2=\int_M|A|^2|\varphi|^2-\int_M\frac{s_g}{4}|\varphi|^2.$$
This forces $\varphi$ to be trivial.
\end{proof}

\end{document}